\newcommand{\R}{\mathbb{R}}                     % the real line
\newcommand{\C}{\mathbb{C}}                     % the complex plane
\newcommand{\T}{\mathbb{T}}                     % the torus
\newcommand{\set}[2]{\left\{{#1}\mid{#2}\right\}}       % the set
\newcommand{\ran}{\mathrm{ran\,}}		% range
\newcommand{\vol}{\mathrm{vol}}			% volume
\newtheorem{thm}{\sc Theorem}      % numbered within each section
\newtheorem{lem}{\sc Lemma}            % numbered absolutely
\title{Middle-dimensional squeezing and non-squeezing behavior of symplectomorphisms}
\author{Alberto Abbondandolo and Slava Matveyev}
\date{}
\begin{document}

\maketitle

\section*{Introduction}

Let $\Omega$ be the standard symplectic form
\[
\Omega = \sum_{j=1}^n dp_j \wedge dq_j
\]
on $\R^{2n}$, the standard Euclidean space endowed with coordinates $(q_1,p_1,\dots,q_n,p_n)$. The nonsqueezing theorem of Gromov states that no symplectic diffeomorphism (i.e.\ diffeomorphism which preserves $\Omega$) can map the $2n$-dimensional ball $B^{2n}(R)$ of radius $R$ into the cylinder
\[
\set{(q_1,p_1,\dots,q_n,p_n)\in \R^{2n}}{q_1^2+p_1^2 < S^2}
\]
if $S<R$ (see \cite{gro85}, and also \cite{eh89}, \cite{vit89}, \cite{hz94} for a different proof). This theorem shows that symplectic diffeomorphisms present two-dimensional rigidity phenomena (the base of the cylinder has dimension two), and not just the preservation of volume ensured by Liouville's theorem (which in the modern language is just a consequence that, preserving $\Omega$, a symplectic diffeomorphism must preserve also $\Omega^n$, the $n$-times wedge of $\Omega$ by itself, which is a multiple of the standard volume form). 

Since symplectic diffeomorphisms preserve also the $2k$-form $\Omega^k$ for every $1\leq k \leq n$, after Gromov's result it was natural to think that there should be also middle dimensional rigidity phenomena. A possible question concerned the possibility of symplectically embedding one polydisk $\Pi := B^2(R_1)\times \dots \times B^2(R_n)$ into another one $\Pi':=B^2(R_1')\times \dots \times B^2(R_n')$. If we adopt the standard convention that the radii of each of the two polydisks are increasing, Liouville's and Gromov's theorems immediately imply that if $\Pi$ can be symplectically embedded into $\Pi'$, then $R_1\cdots R_n \leq R_1' \cdots R_n'$ and $R_1\leq R_1'$. It was natural to expect other rigidity phenomena concerning other products of the $R_j$'s, but L.\ Guth recently ruled this out, by proving that there exists a constant $C(n)$ such that if $C(n)R_1\leq R_1'$ and $C(n) R_1 \cdots R_n \leq R_1' \cdots R_n'$, then $\Pi$ can be symplectically embedded into $\Pi'$ (see \cite{gut08}). See also \cite{sch05}, \cite{ms10}, \cite{hut10}, \cite{hk10} and references therein for more quantitative results about the symplectic embedding problem for polydisks and other domains.

In this article, we would like to take a different point of view and to keep the ball as the domain of our symplectic embeddings. We first notice that Gromov's nonsqueezing theorem can be restated by saying that every symplectic embedding $\phi:B^{2n}(R) \rightarrow \R^{2n}$ must satisfy the inequality
\begin{equation}
\label{area}
\mathrm{area} \bigl(P \phi (B^{2n}(R)) \bigr) \geq \pi R^2,
\end{equation}
where $P$ denotes the orthogonal projector onto the plane corresponding to the conjugate coordinates $q_1,p_1$. In fact, the latter statement is obviously stronger than the former. On the other hand, if the area of $A:= P \phi (B^{2n}(R))$ is smaller than $\pi R^2$, then we can find a smooth area preserving diffeomorphism $\psi:A\hookrightarrow B^2(S)$ for some $S<R$ (by a theorem of Moser \cite{mos65}, see also \cite[Introduction, Theorem 2]{hz94})), and the symplectic diffeomorphism $(\psi\times \mathrm{id}_{\R^{2n-2}}) \circ \phi$ maps $B^{2n}(R)$ into $Z(S)$, thus violating the former formulation of Gromov's nonsqueezing theorem. Actually, the reformulation (\ref{area}) is closer to Gromov's original proof.

In the above reformulation, the projector $P$ can be replaced by the orthogonal projector onto any complex line of $\R^{2n}\cong \C^n$, where the identification is given by $(q_1,p_1,\dots,q_n,p_n)\mapsto (q_1+ip_1,\dots,q_n+ip_n)$. If one does not wish to use the complex structure of $\R^{2n}$, (\ref{area}) can be reformulated using only the symplectic structure, by saying that if  $V$ is a symplectic plane in $\R^{2n}$ and $Q$ is the projector onto $V$ along the symplectic orthogonal complement of $V$, then
\begin{equation}
\label{area2}
\int_{Q \phi (B^{2n}(R))} \Omega \geq \pi R^2,
\end{equation}
for every symplectic embedding $\phi:B^{2n}(R) \rightarrow \R^{2n}$. In fact, (\ref{area2}) follows from (\ref{area}) because the projector $Q$ is conjugated to an orthogonal projector onto a complex line by a symplectic linear automorphism of $\R^{2n}$.

Looking at the inequality (\ref{area}), it seems natural to ask whether the nonsqueezing theorem has the following middle dimensional generalization: if $V$ is a complex linear subspace of $\R^{2n}$ of real dimension $2k$ and $P$ is the orthogonal projector onto $V$, is it true that
\begin{equation}
\label{middle}
\vol_{2k} \bigl(P \phi (B^{2n}(R)) \bigr) \geq \omega_{2k} R^{2k},
\end{equation}
for every symplectic embedding $\phi:B^{2n}(R) \rightarrow \R^{2n}$ ? Here $\omega_{2k}$ denotes the volume of the unit $2k$-dimensional ball. Indeed, the case $k=1$ is precisely Gromov's theorem, and for $k=n$ we have the equality in (\ref{middle}), by Liouville's theorem. The purely symplectic reformulation of this question, analogous to (\ref{area}), would be asking whether 
\begin{equation}
\label{middle2}
\frac{1}{k!}  \int_{Q \phi (B^{2n}(R))} \Omega^k  \geq \omega_{2k} R^{2k},
\end{equation}
when $Q$ is the projector onto a symplectic $2k$-dimensional linear subspace of $\R^{2n}$ along its symplectic orthogonal (the factor $k!$ appears because $\Omega^k$ restricts to $(k!)$-times the standard $2k$-volume form on every complex linear subspace of real dimension $2k$). 

The first aim of this note is to show that (\ref{middle}) (hence also (\ref{middle2})) holds in the linear category: 

\begin{thm}
Let $\Phi$ be a linear symplectic automorphism of $\R^{2n}$, and let $P: \R^{2n} \rightarrow \R^{2n}$ be the orthogonal projector onto a complex linear subspace $V\subset \R^{2n}$ of real dimension $2k$, $1\leq k\leq n$. Then
\[
\vol_{2k} \bigl( P \Phi (B^{2n}(R)) \bigr) \geq \omega_{2k} R^{2k},
\]
and the equality holds if and only if the linear subspace $\Phi^* V$ is complex.
\end{thm} 

The proof is elementary but, as in the case of the standard linear nonsqueezing (see \cite[Theorem 2.38]{ms98}), not completely straightforward. Our second aim is to show that in the nonlinear category this middle dimensional generalization of the nonsqueezing theorem is false:

\begin{thm}
\label{squeez}
Let $P:\R^{2n}\rightarrow \R^{2n}$ be the orthogonal projection onto a complex linear subspace of $\R^{2n}$ of real dimension $2k$, with $2\leq k \leq n-1$. For every $\epsilon>0$ there exists a smooth symplectic embedding $\phi: B^{2n} (1)\rightarrow \R^{2n}$ such that
\[
\mathrm{vol}_{2k} \bigl( P \phi(B^{2n} (1)) \bigr) < \epsilon.
\]
\end{thm}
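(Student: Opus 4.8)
The plan is to construct the embedding as a composition of a large-scale ``dilation-type'' symplectomorphism followed by a product of area-preserving maps that collapse the image in the directions of the projection $P$. Write $\R^{2n} = \C^k \times \C^{n-k}$ so that $V = \C^k \times \{0\}$ is the target of $P$, and split coordinates as $z = (z_1,\dots,z_k) \in \C^k$ and $w = (w_1,\dots,w_{n-k}) \in \C^{n-k}$. The key observation is that although area (in the $q_j,p_j$ plane) is rigid under symplectomorphisms of $\R^2$, we have complete freedom to make a region \emph{long and thin}: for any bounded planar region and any $\delta>0$ there is an area-preserving embedding of it into a disk of radius $\delta$ \emph{times} a very long interval, i.e.\ into a thin rectangle. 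Composing such maps in each of the first $k$ complex factors while compensating in the last $n-k$ factors is what produces the squeezing; the hypotheses $k \geq 2$ and $k \leq n-1$ guarantee that there are at least two squeezed factors to play with and at least one free factor to absorb the volume.

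Concretely, here is the sequence of steps. \emph{Step 1.} First map $B^{2n}(1)$ symplectically into a region of a specific product shape. Using a symplectic shear/rescaling one arranges that the image lies in a set of the form $D^2(r_1)\times\dots\times D^2(r_k)\times U$, where $U\subset\C^{n-k}$ is bounded but the $r_j$ can be prescribed subject only to the volume constraint $\prod r_j^2 \cdot \vol(U) \gtrsim 1$; crucially we can take each $r_j$ as small as we like provided $\vol(U)$ is correspondingly large. Because $k\le n-1$ there is room in the $w$-factors to absorb this volume. \emph{Step 2.} In each of the first $k$ factors, apply an area-preserving embedding $\psi_j : D^2(r_j)\hookrightarrow \C$ whose image is a long thin ``snake'' contained in the strip $\{|\re z_j| < \eta\}$ for a small $\eta>0$ (Moser's theorem, cited in the excerpt after (\ref{area}), guarantees such area-preserving maps exist between planar regions of equal area; a long thin snake has area equal to that of $D^2(r_j)$ when its length is chosen appropriately). \emph{Step 3.} Take $\phi$ to be the composition of Step 1 with $\psi_1\times\dots\times\psi_k\times\mathrm{id}_{\C^{n-k}}$. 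Then $P\phi(B^{2n}(1))$ is contained in a product of $k$ thin strips (times the bounded set $U$, but $U$ does not enter because $P$ projects it away only if $V=\C^k\times\{0\}$; here $P\phi(B^{2n}(1)) \subset (\text{strip})^k$ inside $\C^k$), so its $2k$-volume is at most $(2\eta\cdot L)^k$-type — wait, one must be careful: a thin strip has infinite area, so the relevant bound is that the image lies in a set whose $2k$-volume we can directly estimate as a product of \emph{one-dimensional widths times lengths}, which requires keeping the snake inside a box $(-\eta,\eta)\times(0,L)$ and noting $\vol_{2k}\bigl((-\eta,\eta)\times(0,L)\bigr)^k = (2\eta L)^k$; choosing $\eta$ small after fixing $L$ makes this less than $\epsilon$.

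The main obstacle — and the place where the argument has real content rather than bookkeeping — is \emph{Step 1}: one must genuinely exhibit a symplectic embedding of the round ball $B^{2n}(1)$ into a product region $D^2(r_1)\times\dots\times D^2(r_k)\times U$ with all $r_j$ arbitrarily small. The ball does not literally sit inside such a product (that would contradict Gromov for $k=1$ if we also shrank $U$, but here we are \emph{enlarging} $U$, which is allowed), so one needs an explicit squeezing construction: for instance, a composition of symplectic shears of the form $(z_j,w_\ell)\mapsto(z_j, w_\ell + f(z_j))$ that ``unfolds'' the ball's extent in the $z_j$-directions into the $w_\ell$-directions, iterated across the $k$ factors. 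This is the symplectic-embedding heart of the theorem, and I expect it to require either an explicit generating-function computation or an appeal to a known multiply-iterated folding/squeezing lemma; the constraint $k\ge 2$ presumably enters because a single $z$-factor cannot be squeezed into a small disk while staying inside $\R^{2n}$ (Gromov again), whereas with $k\ge2$ factors one squeezes them alternately, trading extent back and forth, and the role of the free $\C^{n-k}$ factor with $k\le n-1$ is to serve as the reservoir into which the excess ``length'' of the snakes is ultimately stored. Once Step 1 is in hand, Steps 2–3 are routine applications of Moser's theorem plus elementary volume estimates.
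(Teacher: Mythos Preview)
Your Step 1 is impossible, and this is not a technical obstacle but a direct contradiction with Gromov's theorem. You propose to symplectically embed $B^{2n}(1)$ into $D^2(r_1)\times\dots\times D^2(r_k)\times U$ with each $r_j$ arbitrarily small. But $D^2(r_1)\times\dots\times D^2(r_k)\times U$ sits inside the cylinder $D^2(r_1)\times\R^{2n-2}$, so Gromov's nonsqueezing forces $r_1\geq 1$; the same argument applied to each factor forces every $r_j\geq 1$. Your parenthetical remark that ``enlarging $U$ is allowed'' misses the point: the obstruction is two-dimensional and is completely insensitive to what happens in the remaining $2n-2$ directions. The alternating-shear idea you sketch cannot rescue this, because any such composition is still a symplectic embedding and is still subject to the same obstruction factor by factor. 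Your Steps 2--3 are also confused (an area-preserving snake in a box $(-\eta,\eta)\times(0,L)$ has $2\eta L$ bounded below by the area it must carry, so you cannot shrink $\eta$ after fixing $L$), but this is moot once Step 1 fails.

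The paper's proof circumvents Gromov by routing the embedding through a topologically nontrivial intermediate surface. The key object is the punctured torus $\Sigma$ of area $1$: Guth's lemma gives, for every $R$, a symplectic embedding $B^4(R)\hookrightarrow\Sigma\times\R^2$ --- something that would be impossible with a disk of area $1$ in place of $\Sigma$, precisely by Gromov. A companion lemma then embeds $B^2(R)\times\Sigma$ into $\R^4$. Composing (in the base case $n=3$, $k=2$)
\[
B^6(R)\hookrightarrow B^2(R)\times B^4(R)\hookrightarrow B^2(R)\times\Sigma\times\R^2\hookrightarrow\R^4\times\R^2,
\]
the projection onto the first $\R^4$ lands inside the image of $B^2(R)\times\Sigma$, whose $4$-volume is $\pi R^2\cdot 1=\pi R^2$. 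Rescaling by $1/R$ gives a symplectic embedding of $B^6(1)$ whose projected $4$-volume is at most $\pi/R^2$, which is small for $R$ large. The genus of $\Sigma$ is what evades the nonsqueezing obstruction; no construction that stays within products of disks can work.
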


The proof of this second result is based on some elementary but ingenious lemmata from the already mentioned paper of Guth \cite{gut08}.

Therefore, the middle-dimensional non-squeezing inequality (\ref{middle}) stops holding when passing from linear to nonlinear symplectic maps. However, the counterexample produced in the proof of Theorem 2 deforms the ball tremendously and it is natural to ask where the border of the validity of (\ref{middle}) lies. An interesting question seems to be: does (\ref{middle}) hold locally?

Such a question can be made precise in the following way. Given a symplectic diffeomorphism $\phi:\R^{2n} \rightarrow \R^{2n}$ and a point in $\R^{2n}$, without loss of generality the origin, is it true that (\ref{middle}) holds for every $R>0$ small enough? By rescaling, it is equivalent to ask whether the inequality (\ref{middle}) with $R=1$ holds when the symplectic embedding $\phi:B(1) \rightarrow \R^{2n}$ is $C^{\infty}$-close enough to a linear symplectic map $\Phi$. 

By the last assertion of Theorem 1, the answer is trivially positive when the subspace $\Phi^* V$ is not complex, just by continuity (in the former formulation, the hypothesis would be that $D\phi(0)^*V$ is not complex). But when $\Phi^* V$ is complex, for instance in the case of a $\phi$ which is $C^{\infty}$-close to the identity, we do not know the answer to this question. The last section of this article contains some remarks on such a problem and the suggestion that it might be related to an integrability issue.

\paragraph{Acknowledgements.}
This paper was influenced by stimulating discussions with Pietro Majer and Felix Schlenk. We wish to thank also Ivar Ekeland for some interesting suggestions about the local question, which we intend to develop in the future. 

\section{Linear non-squeezing}

We start by recalling the formulas for the volume of the image of the ball by a linear surjection. We denote by $B^n=B^n(1)$ the open unit ball about $0$ in $\R^n$. 

Let $n\geq k$ be positive integers and let $A:\R^n \rightarrow \R^k$ be linear and onto. We denote by $A^* : \R^k \rightarrow \R^n$ the adjoint of $A$ with respect to the Euclidean inner product. The linear mapping $A^* A:\R^n \rightarrow \R^n$  symmetric and semi-positive, with $k$-codimensional kernel 
\[
\ker A^* A = \ker A = (\ran A^*)^{\perp}.
\]
In particular, $A^*A$ restricts to an automorphism of the $k$-dimensional space $(\ker A)^{\perp} = \ran A^*$ and, since this restriction is the composition of the two isomorphisms
\[
A|_{(\ker A)^{\perp}} : (\ker A)^{\perp} \rightarrow \R^k, \quad A^* : \R^k \rightarrow (\ker A)^{\perp},
\]
which have the same determinant, being one the adjoint of the other, we deduce that
\[
\det \left( A^*A|_{(\ker A)^{\perp}} \right) = \Bigl| \det \left( A|_{(\ker A)^{\perp}} \right)\Bigr|^2.
\]
Here, the absolute value of the determinant of linear maps between different spaces of the same dimension is induced by the Euclidean inner products.
Let $\xi_1,\dots,\xi_k$ be a basis of $(\ker A)^{\perp}$ with
\[
|\xi_1 \wedge \dots \wedge \xi_k | = 1,
\]
where the Euclidean norm of $\R^n$ is extended to multi-vectors in the standard way (in particular, $|\xi_1 \wedge \dots \wedge \xi_k |$ is the $k$-volume of the prism generated by $\xi_1,\dots,\xi_k$). Since $A(B^n) = A(B^n\cap (\ker A)^{\perp})$, we find
\begin{equation}
\label{Volume}
\frac{\vol_k\bigl(A(B^n)\bigr)}{\omega_k}  = |A\xi_1 \wedge \cdots \wedge A\xi_k| =  |\det \left( A|_{(\ker A)^{\perp}} \right)| =
\sqrt{\det (A^*A|_{(\ker A)^{\perp}})},
\end{equation}
where $\omega_k$ denotes the $k$-volume of the unit $k$-ball.
Furthermore, the real function
\[
W \mapsto \bigl|\det A|_W\bigr| , \quad W \in \mathrm{Gr}_k(\R^n),
\]
where $\mathrm{Gr}_k(\R^n)$ denotes the Grassmannian of $k$-dimensional subspaces of $\R^n$, has a unique maximum at $(\ker A)^{\perp}= \ran A^*$, hence
\begin{equation}
\label{maxi}
\max_{W\in \mathrm{Gr}_k(\R^n)} \bigl|\det A|_W\bigr| = \bigl|\det A|_{\ran A^*} \bigr| .
\end{equation}

\medskip

Let $\R^{2n}$ be the $2n$-dimensional Euclidean space endowed with coordinates 
\[
(q_1,p_1,\dots,q_n,p_n),
\]
with the complex structure $i$ corresponding to the identification 
\[
(q_1,p_1,\dots, q_n,p_n) \equiv (q_1+ip_1, \dots,q_n+i p_n),
\]
and with the symplectic form given by the imaginary part of the corresponding Hermitian product, that is 
\[
\Omega = \sum_{j=1}^n dp_j \wedge dq_j.
\]
H.\ Federer refers to the next result as to the Wirtinger inequality, see \cite[section 1.8.1]{fed69}.

\begin{lem}
\label{comass}
Let $1\leq k\leq n$ and let $\Omega^k$ be the $k$-times wedge product of $\Omega$ by itself. Then 
\[
\bigl| \Omega^k [u_1,\dots,u_{2k} ] \bigr| \leq k! \, |u_1 \wedge \cdots \wedge u_{2k}|, \quad \forall u_1,\dots,u_{2k} \in \R^{2n},
\]
and, in the non-trivial case of linearly independent vectors $u_j$, the equality holds if and only if the $u_j$'s span a complex subspace.
\end{lem}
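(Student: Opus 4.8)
The plan is to reduce the general inequality to a normal-form computation using the linear symplectic group, and then extract the equality discussion from the structure of that normal form. First I would dispose of the degenerate case: if $u_1,\dots,u_{2k}$ are linearly dependent, the right-hand side is $0$, and $\Omega^k[u_1,\dots,u_{2k}]=0$ as well since a $2k$-form annihilates any dependent $2k$-tuple, so the inequality holds with equality trivially (and ``complex'' need not be discussed). So assume from now on that the $u_j$ are linearly independent and span a $2k$-dimensional subspace $W\subseteq\R^{2n}$. Both sides of the claimed inequality are multilinear alternating in $(u_1,\dots,u_{2k})$, so both transform the same way under a change of basis of $W$; hence the ratio $\Omega^k[u_1,\dots,u_{2k}]/|u_1\wedge\cdots\wedge u_{2k}|$ depends only on $W$, and it suffices to prove $|\Omega^k|_W|\le k!$ with respect to any one orthonormal basis of $W$, with equality iff $W$ is complex.

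The key step is to choose that orthonormal basis well. I would invoke the standard linear-algebra normal form for a subspace relative to a symplectic form together with a compatible inner product: there is an orthonormal basis $e_1,f_1,\dots,e_k,f_k$ of $W$ and angles $\theta_1,\dots,\theta_k\in[0,\pi/2]$ such that $\Omega|_W=\sum_{j=1}^k \cos\theta_j\, e_j^*\wedge f_j^*$ in the dual basis. (This is the ``symplectic/K\"ahler angles'' decomposition; concretely one applies the spectral theorem to the antisymmetric bilinear form $\Omega|_W$ on the Euclidean space $W$, grouping the nonzero $\pm i$-eigenvalue pairs, and notes that $|\Omega(u,v)|\le|u||v|$ forces each coefficient into $[0,1]$, whence writing it as a cosine.) With this basis, $|u_1\wedge\cdots\wedge u_{2k}|=|e_1\wedge f_1\wedge\cdots\wedge e_k\wedge f_k|=1$, while a direct expansion of the $k$-fold wedge gives
\[
\Omega^k|_W = k!\,\Bigl(\prod_{j=1}^k \cos\theta_j\Bigr)\, e_1^*\wedge f_1^*\wedge\cdots\wedge e_k^*\wedge f_k^*,
\]
so $\Omega^k[e_1,f_1,\dots,e_k,f_k]=k!\prod_j\cos\theta_j$. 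Since each $\cos\theta_j\in[0,1]$ we get $|\Omega^k|_W|\le k!$, which is the inequality.

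For the equality case, the displayed formula shows $|\Omega^k[e_1,f_1,\dots,e_k,f_k]|=k!$ holds iff $\cos\theta_j=1$, i.e. $\theta_j=0$, for every $j$. When all $\theta_j=0$ we have $\Omega|_W=\sum_j e_j^*\wedge f_j^*$, so $W$ with the restricted data is a standard symplectic plane-sum; it remains to see this forces $W$ to be a complex subspace, i.e. $iW=W$. This is where one uses compatibility of $i$ with $\Omega$ and the Euclidean metric, namely $\Omega(\,\cdot\,,\,\cdot\,)=\langle i\,\cdot\,,\,\cdot\,\rangle$: from $\Omega|_W=\sum_j e_j^*\wedge f_j^*$ and orthonormality one computes $\langle i e_j, f_j\rangle=1=|ie_j||f_j|$, forcing $ie_j=f_j$ (equality in Cauchy--Schwarz), and similarly $if_j=-e_j$; hence $i$ maps the spanning set of $W$ into $W$, so $W$ is complex. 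Conversely, if $W$ is complex then $\Omega^k|_W$ is $k!$ times the standard volume form on $W$, by the parenthetical remark already made in the paper preceding (\ref{middle2}), giving equality.

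The main obstacle I expect is the clean derivation and justification of the angles normal form $\Omega|_W=\sum_j\cos\theta_j\,e_j^*\wedge f_j^*$ with an \emph{orthonormal} basis of $W$ — this is the technical heart, requiring the simultaneous treatment of the Euclidean structure and the antisymmetric form $\Omega|_W$ via the spectral theorem for the skew-symmetric operator $J_W$ defined by $\langle J_W u,v\rangle=\Omega(u,v)$ — after which both the inequality and the equality characterization drop out essentially by inspection. A secondary point requiring care is the Cauchy--Schwarz rigidity step identifying $ie_j=f_j$, which relies precisely on the orthonormality built into the normal form; this is why it is worth the effort to get an orthonormal, rather than merely symplectic, adapted basis.
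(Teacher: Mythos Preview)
Your argument is correct: the reduction to an orthonormal basis of $W$, the spectral normal form $\Omega|_W=\sum_j \cos\theta_j\, e_j^*\wedge f_j^*$ for the skew operator $J_W$ defined by $\langle J_Wu,v\rangle=\Omega(u,v)$, the bound $|\cos\theta_j|\le 1$ coming from $|\Omega(u,v)|=|\langle iu,v\rangle|\le|u||v|$, and the Cauchy--Schwarz rigidity step $ie_j=f_j$ in the equality case are all sound. Note that the paper does not supply its own proof of this lemma at all; it simply records the statement and refers the reader to Federer \cite[section 1.8.1]{fed69}, where the Wirtinger inequality is proved by essentially the same K\"ahler-angle normal-form computation you outline. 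So your proposal is not so much an alternative to the paper's proof as a spelled-out version of the cited one.
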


We are now ready to prove the linear non-squeezing result:

\setcounter{thm}{0}

\begin{thm}
Let $\Phi$ be a linear symplectic automorphism of $\R^{2n}$, and let $P: \R^{2n} \rightarrow \R^{2n}$ be the orthogonal projector onto a complex linear subspace $V\subset \R^{2n}$ of real dimension $2k$, $1\leq k\leq n$. Then
\[
\vol_{2k} \bigl( P \Phi (B^{2n}(R)) \bigr) \geq \omega_{2k} R^{2k},
\]
and the equality holds if and only if the linear subspace $\Phi^* V$ is complex.
\end{thm} 

\begin{proof}
By linearity, we may assume $R=1$. We consider the linear surjection
\[
A := P \Phi : \R^{2n} \rightarrow V.
\]
As before, let $\xi_1,\dots,\xi_{2k}$ be a basis of $(\ker A)^{\perp} = \ran A^* = \Phi^* V$ such that
\[
|\xi_1 \wedge \cdots \wedge \xi_{2k} | = 1.
\]
By the identity (\ref{Volume}) and Lemma \ref{comass},
\begin{equation}
\label{uno}
\begin{split}
\left( \frac{\vol_{2k} \bigl(P\Phi( B^{2n})\bigr)}{\omega_{2k}} \right)^2  = \det (A^* A|_{(\ker A)^{\perp}}) = |A^* A \xi_1 \wedge \cdots  \wedge A^* A\xi_{2k}| \\ \geq \frac{1}{k!} \bigl| \Omega^k [A^* A \xi_1,\dots,A^* A \xi_{2k} ] \bigr| = \frac{1}{k!} \bigl| \Omega^k [\Phi^* A \xi_1,\dots,\Phi^* A \xi_{2k} ] \bigr|,
\end{split} \end{equation}
and the equality holds if and only if the subspace spanned by $A^*A \xi_1, \dots, A^* A\xi_{2k}$, that is $\Phi^* V$, is complex.
Since $\Phi$ is symplectic, so is $\Phi^*$, hence
\begin{equation}
\label{due}
\Omega^k [\Phi^* A \xi_1,\dots,\Phi^* A \xi_{2k} ] = \Omega^k [A \xi_1,\dots,A \xi_{2k} ].
\end{equation}
Since the restriction of $|\Omega^k|$ to the complex subspace $V$ is $(k!)$-times the standard volume form, we have
\begin{equation}
\label{tre}
\frac{1}{k!} \bigl| \Omega^k [A \xi_1,\dots,A \xi_{2k} ] \bigr| = |A \xi_1 \wedge \cdots \wedge A\xi_{2k}| = \frac{\vol_{2k} \bigl(A(B^{2n})\bigr)}{\omega_{2k} } = \frac{\vol_{2k} \bigl(P \Phi (B^{2n})\bigr)}{\omega_{2k}},
\end{equation}
where we have used again (\ref{Volume}). By (\ref{uno}), (\ref{due}), and (\ref{tre}) we conclude that
\begin{equation}
\label{linnonsque}
\vol_{2k} (P \Phi (B^{2n})) \geq  \omega_{2k},
\end{equation}
and that the equality holds if and only if the linear subspace $\Phi^* V$ is complex.
\end{proof} 

\section{Nonlinear squeezing}

Let $B^{2n}(R)$ be the open ball about the origin of $\R^{2n}$ with radius $R$. As in the previous section, we just write $B^{2n}$ when the radius is 1.

We denote by $\Sigma$ the punctured torus $\T^2 \setminus \{\mbox{pt}\}$ equipped with a symplectic form of area 1. The following lemma is due to L.\ Guth \cite[Section 2, Main Lemma]{gut08}:

\begin{lem}
\label{guth1}
For every $R>0$ there exists a smooth symplectic embedding of $B^4(R)$ into $\Sigma \times \R^2$.
\end{lem}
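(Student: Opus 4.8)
The plan is to build the required symplectic embedding of $B^4(R)$ into $\Sigma \times \R^2$ essentially by hand, exploiting that $\Sigma$, being an open surface of area $1$, is very flexible as a symplectic manifold while still having the right total area to accommodate a ball. First I would recall that for the standard ball $B^4(R)$, Gromov's nonsqueezing forbids an embedding into $B^2(S)\times\R^2$ with $\pi S^2 < \pi R^2$, but there is no such obstruction to embedding into $Z(R)\times\R^2$ or, more relevantly, into any symplectic surface $W$ of area $> \pi R^2$ times $\R^2$; the point here is that $\Sigma$ has area exactly $1$, so I need to do something cleverer than merely citing a target with large area. The key structural fact I would use is that $\Sigma = \T^2\setminus\{\mathrm{pt}\}$ admits, for every $\delta>0$, an embedded disk $D$ of area $1-\delta$ such that the complement $\Sigma\setminus D$ is a thin neighborhood of the ``figure eight'' core of the punctured torus; more usefully, $\Sigma$ deformation retracts onto a wedge of two circles, and one can spread any prescribed area as thinly as one likes along these circles. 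Thus $\Sigma$ contains symplectically embedded copies of $B^2(S)$ for every $S$ with $\pi S^2 < 1$, but it also contains ``long thin'' regions that let one route area through the handle.

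The heart of the argument, I expect, is a decomposition of the $4$-ball into pieces each of which embeds into a controllable model, followed by a reassembly inside $\Sigma\times\R^2$. Concretely I would first observe that it suffices to treat $R$ just below $1/\sqrt\pi$ up to rescaling --- no, that is exactly what fails: for the stated lemma $R$ is \emph{arbitrary}, so the mechanism must genuinely use the $\R^2$ factor to ``store'' the part of the symplectic area that does not fit into $\Sigma$ alone. The plan, then, is: (i) write $B^4(R)$ as the image of a polydisk-like region under a symplectic map, or more directly cover $B^4(R)$ by finitely many rectangular boxes $P_i = U_i\times V_i$ (products of area-$a_i$ disks in the first factor with the second factor) that overlap only in sets of small measure, using a partition adapted to the $p_1q_1$-foliation; (ii) for each box, note that the first factor $U_i$, being an area-$a_i$ disk with $a_i$ possibly larger than $1$, does \emph{not} embed symplectically into $\Sigma$, but a \emph{translate of it along a Hamiltonian loop} in $\Sigma\times\R^2$ does --- this is precisely the trick of letting the embedding wind around the handle of $\Sigma$ while the extra area bookkeeping happens in $\R^2$; (iii) glue the images, using that they can be made disjoint by shifting in the noncompact $\R^2$ direction. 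I would model step (ii) on the standard construction showing $B^2(a)\times B^2(b)$ embeds into $Z(c)\times\R^2$ for any $c$ with $c^2 > $ (something depending only on the product $ab$ and on a ``folding'' constant), folding the disk into a thin spiral.

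A cleaner route, which I would actually prefer to present, is to reduce to a two-dimensional statement. Because $\Sigma\times\R^2 \cong \Sigma\times\R^2$ and the ball embedding question for $B^4$ is closely tied to embeddings of disks, I would: first embed $B^4(R)$ symplectically into $B^2(R')\times B^2(R')$ for $R'$ slightly larger than $R$ (trivial, the ball sits inside the polydisk); then symplectically embed $B^2(R')\times B^2(R')$ into $\Sigma\times\R^2$ by embedding the \emph{first} $B^2(R')$ into $\Sigma$ ``with folding along the handle'' while simultaneously unfolding into the $\R^2$ direction. The enabling fact is Guth's observation that a disk of any area can be symplectically embedded into $\Sigma\times\R^2$ (essentially: cut the disk into thin annuli, route them one by one around the handle of the torus, each time using a fresh slab of $\R^2$ to make room), and then this extends to a product embedding by a fibered construction. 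Either way, the \textbf{main obstacle} is step (ii)/(iii): arranging the folded images of the pieces to be genuinely \emph{disjoint and symplectically embedded simultaneously}, i.e.\ controlling the interaction between the folding in $\Sigma$ and the spreading in $\R^2$, and ensuring the resulting map is a smooth embedding rather than merely an immersion. This is exactly the kind of ``elementary but ingenious'' bookkeeping the introduction attributes to Guth, so I would lift the relevant gluing lemma from \cite{gut08} and check that it applies with $\Sigma\times\R^2$ (rather than a polydisk) as the ambient space, which it does because only the noncompactness of the $\R^2$ factor and the area of $\Sigma$ are used.
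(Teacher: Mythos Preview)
The paper does not prove this lemma: it attributes it directly to Guth \cite[Section~2, Main Lemma]{gut08} and uses it as a black box. So there is no ``paper's own proof'' to compare against.

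As for your sketch itself: the broad instinct --- wrap pieces of the ball around the handle of $\Sigma$ and use the non-compact $\R^2$ direction to keep the wrapped pieces disjoint --- is the right flavour, and is indeed how Guth's argument goes. But two concrete points deserve flagging. First, your ``cleaner route'' through the polydisk $B^2(R')\times B^2(R')$ is not a simplification: a product-type embedding into $\Sigma\times\R^2$ would require $B^2(R')\hookrightarrow\Sigma$, which fails as soon as $\pi R'^2>1$, so you are forced back to a genuinely fibered (non-product) construction, i.e.\ back to the hard part. Guth does \emph{not} pass through a single large polydisk; he slices the ball, via its toric/moment-map structure, into many \emph{thin} polydisks $B^2(a_i)\times B^2(b_i)$ with $a_i$ small, each of which embeds easily, and then stacks their images in the $\R^2$ direction. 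Second, your step~(i) asks for boxes that ``overlap only in sets of small measure'', but small-measure overlap is not good enough for an embedding --- you need genuine disjointness after the folding, and arranging that (your acknowledged ``main obstacle'') is precisely the content of Guth's lemma, not a routine bookkeeping step to be lifted afterwards. In short, your proposal correctly identifies the mechanism but does not supply it; since the paper itself is content to cite Guth, that is perhaps all that is required here.
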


The next lemma is a simple modification of Lemma 3.1 in \cite{gut08} (where an embedding with extra properties is constructed for $R<1/10$):

\begin{lem}
\label{guth2}
For every $R>0$ there exists a smooth symplectic embedding of $B^2(R)\times \Sigma$ into $\R^4$.
\end{lem}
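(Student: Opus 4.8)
The plan is to follow the construction of Guth in \cite[Lemma 3.1]{gut08} and to observe that the hypothesis $R<1/10$ imposed there serves only to keep the image of the embedding inside a prescribed bounded region of $\R^4$; once one drops this requirement — as we may, since the statement only asks for \emph{some} symplectic embedding into all of $\R^4$ — the same construction produces an embedding for every $R>0$, and this is the announced modification. Let me recall the idea and then indicate why the size of $R$ plays no role.

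First I would realize $\Sigma$ from a planar surface by a self-gluing. Let $c\subset\Sigma$ be a non-separating simple closed curve; cutting $\Sigma$ along $c$ yields a \emph{planar} surface $P$ — a pair of pants (three-holed sphere), two of whose holes are the two copies $c_1,c_2$ of $c$ and whose third hole is the puncture of $\Sigma$ — together with an identification $\jmath\colon c_1\to c_2$ with $\Sigma=P/(x\sim\jmath(x))$. One may choose $c$ and the area-$1$ symplectic form on $\Sigma$ so that $(P,\omega)$ symplectically embeds into $(\R^2,\omega_0)$; fix such an embedding and regard $P$ as an open planar domain of area $1$, with the puncture-end of $P$ realized as a finite-area planar end. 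Fix also two disjoint symplectic collar neighbourhoods $C_1\supset c_1$ and $C_2\supset c_2$ in $P$, chosen so that $\jmath$ glues $C_1$ and $C_2$ into a single annulus $\widetilde C$; thus $\Sigma$ is obtained from $P$ by deleting $C_1\cup C_2$ and re-attaching $\widetilde C$ along the two frontier circles of the collars.

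Since $P\subset\R^2$, the product $P\times B^2(R)$ is an open symplectic submanifold of $\R^2\times\R^2=\R^4$, and it contains the two disjoint pieces $C_1\times B^2(R)$ and $C_2\times B^2(R)$. To obtain the embedding of $\Sigma\times B^2(R)$ it then suffices to build, inside $\R^4$, an open symplectic submanifold $\widehat{\mathcal H}$ symplectomorphic to $\widetilde C\times B^2(R)$ which agrees with $C_1\times B^2(R)$, resp. $C_2\times B^2(R)$, near its two ends — the gluing of these two pieces inside $\widehat{\mathcal H}$ being the one prescribed by $\jmath$ — and is otherwise disjoint from $P\times B^2(R)$; the union of $\widehat{\mathcal H}$ with $(P\setminus(C_1\cup C_2))\times B^2(R)$ is then an embedded copy of $\Sigma\times B^2(R)$ in $\R^4$. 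Following Guth, $\widehat{\mathcal H}$ is produced as a symplectic neighbourhood of an embedded symplectic annulus $\mathcal A\subset\R^4$: one drags a collar of $c_1$ (thickened in the $B^2(R)$ directions) through the \emph{complement} of the bounded set $P\times B^2(R)$ by a compactly supported symplectic isotopy of $\R^4$, until it meets, via $\jmath$, a collar of $c_2$; by the symplectic neighbourhood theorem for symplectic submanifolds the thickening of $\mathcal A$ is a product $\mathcal A\times B^2(\rho)$, and one arranges $\rho\ge R$. The key point is that $P\times B^2(R)$ is a \emph{bounded} subset of $\R^4$, so its complement always leaves room to route this handle, however large $R$ is; it is precisely the wish to keep the handle inside a fixed region that forces the smallness of $R$ in \cite{gut08}, a restriction we do not need here.

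The step I expect to be the main obstacle is this gluing: the handle $\widehat{\mathcal H}$ must be matched to the two collars $C_i\times B^2(R)$ not merely up to isotopy but exactly, as symplectic submanifolds with compatibly trivialised normal bundles, which calls for Moser's deformation argument together with a careful choice of the symplectic isotopy that drags one collar around the core region. This is the elementary but delicate construction borrowed from \cite{gut08}, and carrying it out rigorously is the bulk of the work. One minor additional point concerns the puncture of $\Sigma$: since $\R^4$ is non-compact one simply works throughout with the honestly non-compact surface $\Sigma$ and its non-compact planar model $P$, so no limiting or exhaustion argument is needed.
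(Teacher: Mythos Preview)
Your outline is plausible but differs from the paper's proof, and the step you correctly flag as the main obstacle is one the paper avoids entirely. The paper does not cut $\Sigma$ and attach a handle in $\R^4$; instead it fixes a symplectic \emph{immersion} $\psi:\Sigma\to\R^2$ whose image, inside a large square $[-2R,2R]^2$, is the union of a thin horizontal strip $S$ and a thin vertical strip $S'$, with a single transverse self-crossing at $S\cap S'$ (this is the picture in Guth's Figure~3). The product $\mathrm{id}\times\psi:B^2(R)\times\Sigma\to\R^4$ is then a symplectic immersion, and its self-intersection is resolved by applying to the sheet over $\psi^{-1}(S)$ the explicit Hamiltonian shear
\[
(q_1,p_1,q_2,p_2)\longmapsto\bigl(q_1,\,p_1+\chi(q_2),\,q_2,\,p_2+\chi'(q_2)q_1\bigr),
\]
where $\chi\equiv 2R$ on $[-\epsilon,\epsilon]$: on the overlap this pushes $p_1$ out of $(-R,R)$, separating the two sheets in the $B^2(R)$-factor. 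Everything is written in coordinates; no symplectic neighbourhood theorem, no Moser deformation, no handle-gluing is needed.

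Your cut-and-reglue strategy could presumably be completed, but the handle construction you sketch is imprecise at exactly the point you anticipated. The trace of a circle under an ambient symplectic isotopy of $\R^4$ is not automatically a \emph{symplectic} annulus, and ``arranging $\rho\ge R$'' for the normal thickening of $\mathcal A$ is not a consequence of the symplectic neighbourhood theorem alone --- that theorem gives only \emph{some} product neighbourhood, with no control on its width. So as written the proposal has a genuine gap precisely where you expected one, whereas the paper's immersion-plus-shear argument sidesteps the issue completely.
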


\begin{proof}
Choose a positive number $\epsilon< R/3$ and set
\[
S := [-2R,2R] \times ]-\epsilon,\epsilon[, \quad S' := ]-\epsilon,\epsilon[ \times [-2R,2R].
\]
If $\epsilon$ is small enough (precisely, if $8R\epsilon<1$),  we can find a smooth symplectic immersion $\psi: \Sigma\rightarrow \R^2$ such that:
\begin{enumerate}
\item $\psi(\Sigma)\cap [-2R,2R]^2 = S \cup S'$;
\item $\psi^{-1}(S\cap S')$ consists of two disjoint open disks $D,D'\subset \Sigma$;
\item the restrictions $\psi|_{\Sigma \setminus \overline{D}}$ and $\psi|_{\Sigma \setminus \overline{D'}}$ are injective.
\end{enumerate}
Such a symplectic immersion is easily found by starting from a smooth immersion (see \cite[Figure 3]{gut08}) and by making it area-preserving by the already mentioned theorem of Moser). 

Let $\chi$ be a smooth real function on $\R$ with support in $[-2R+\epsilon,2R-\epsilon]$, such that $\chi=2R$ on $[-\epsilon,\epsilon]$ and $\|\chi'\|_{\infty} \leq 3/2$ (such a function exists because $\epsilon<R/3$). The map
\[
\phi: \R^4 \rightarrow \R^4, \quad \phi(q_1,p_1,q_2,p_2) := \bigl(q_1,p_1+\chi(q_2),
q_2,p_2+\chi'(q_2) q_1 \bigr),
\]
is a symplectic diffeomorphism, being the time-one map of the Hamiltonian flow generated by the Hamiltonian $H(q_1,p_1,q_2,p_2) := -\chi(q_2)q_1$. 

We claim that the map
\[
\varphi: B^2(R) \times \Sigma \rightarrow \R^4, \quad \varphi = \left\{ \begin{array}{ll} \mathrm{id} \times \psi & \mbox{on } B^2(R) \times (\Sigma\setminus \psi^{-1}(S)) , \\ \phi\circ (\mathrm{id} \times \psi) & \mbox{on } B^2(R) \times \psi^{-1}(S) , \end{array} \right.
\]
is a symplectic embedding. 

The map $\mathrm{id}\times \psi$ maps a neighborhood of the boundary of $B^2(R) \times \psi^{-1}(S)$ in $B^2(R) \times \Sigma$ into a small neighborhood of $B^2(R) \times \{\pm 2R\} \times ]-\epsilon,\epsilon[ $, on which $\phi=\mathrm{id}$. This proves that $\varphi$ is a smooth symplectic immersion. 

There remains to check that $\varphi$ is an embedding. By the properties of $\psi$, $\varphi$ is an embedding on a neighborhood of $B^2(R) \times \overline{\Sigma \setminus \psi^{-1}(S)}$ and on a neighborhood of $B^2(R) \times \psi^{-1}(S)$. Therefore, it is enough to prove that $\varphi$ maps the sets $B^2(R)\times (\Sigma \setminus \psi^{-1}(S))$ and $B^2(R)\times \psi^{-1}(S)$ into disjoint sets. Let $z\in \psi^{-1}(S)$, set $(q_2,p_2):= \psi(z)\in S$ and let $(q_1,p_1)\in B^2(R)$. Then
\begin{equation}
\label{imago}
\varphi(q_1,p_1,z) = (q_1, p_1 + \chi(q_2) , q_2, p_2 + \chi'(q_2)q_1).
\end{equation}
Since
\[
|p_2+\chi'(q_2) q_1| \leq \epsilon + \|\chi'\|_{\infty} R < \epsilon + \frac{3}{2} R < 2R,
\]
the point $\varphi(q_1,p_1,z)$ belongs to $\R^2 \times [-2R,2R]^2$. The intersection of the latter set with $B^2(R)) \times \varphi((\Sigma\setminus \psi^{-1}(S))$ is $B^2(R)\times S'$, so we must show that $\varphi(q_1,p_1,z)$ does not belong to $B^2(R)\times S'$. If $|q_2|\geq \epsilon$, (\ref{imago}) shows that the last two-dimensional component of $\varphi(q_1,p_1,z)$ does not belong to $S'$. If $|q_2|< \epsilon$, the second component of $\varphi(q_1,p_1,z)$ is 
\[
p_1 + 2R \geq R,
\]
hence the first two-dimensional component of $\varphi(q_1,p_1,z)$ does not belong to $B^2(R)$. This concludes the proof of Lemma \ref{guth2}.
\end{proof} 

We are now ready to prove the nonlinear squeezing result:

\begin{thm}
Let $P:\R^{2n}\rightarrow \R^{2n}$ be the orthogonal projection onto a complex linear subspace of $\R^{2n}$ of real dimension $2k$, with $2\leq k \leq n-1$. For every $\epsilon>0$ there exists a smooth symplectic embedding $\phi: B^{2n} \rightarrow \R^{2n}$ such that
\[
\mathrm{vol}_{2k} \bigl( P \phi(B^{2n}) \bigr) < \epsilon.
\]
\end{thm}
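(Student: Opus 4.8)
The plan is to assemble the desired embedding from Guth's two lemmata, exploiting that the surface $\Sigma$ carries area $1$ no matter how large a ball is squeezed through it. Since the unitary group of $\C^{n}$ acts transitively on complex $2k$-dimensional subspaces by symplectomorphisms which are isometries, we may assume that $P$ is the orthogonal projection $(q_1,p_1,\dots,q_n,p_n)\mapsto(q_1,p_1,\dots,q_k,p_k,0,\dots,0)$. Moreover, if $\psi\colon B^{2n}(R)\to\R^{2n}$ is any symplectic embedding, then $x\mapsto R^{-1}\psi(Rx)$ is a symplectic embedding of $B^{2n}$ whose image has, because $P$ is linear, a $2k$-dimensional shadow of volume $R^{-2k}\vol_{2k}\bigl(P\psi(B^{2n}(R))\bigr)$. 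Hence it suffices to produce, for each large $R$, a symplectic embedding $\psi\colon B^{2n}(R)\to\R^{2n}$ with $\vol_{2k}\bigl(P\psi(B^{2n}(R))\bigr)\le C_{n,k}\,R^{2k-2}$; letting $R\to\infty$ then gives the theorem.

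To construct such a $\psi$, start from the inclusion
\[
B^{2n}(R)\subset B^{4}(R)\times B^{2}(R)\times B^{2(k-2)}(R)\times B^{2(n-k-1)}(R),
\]
set up — and here one uses exactly that $k\ge 2$ and $n-k\ge 1$ — so that $B^{4}(R)$ occupies four of the coordinates spanning $\ran P$, $B^{2}(R)$ occupies one conjugate coordinate pair spanning part of $\ker P$, and the two remaining balls fill out the rest of $\ran P$ and of $\ker P$ respectively. Apply Lemma \ref{guth1} to the first factor to get a symplectic embedding $B^{4}(R)\hookrightarrow\Sigma\times\R^{2}$; after permuting the symplectic factors of the resulting product (a symplectomorphism) so that $\Sigma$ sits next to the $B^{2}(R)$ factor, Lemma \ref{guth2} provides a symplectic embedding $B^{2}(R)\times\Sigma\hookrightarrow\R^{4}$. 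Composing, $B^{2n}(R)$ is symplectically embedded into
\[
E\times\R^{2}\times B^{2(k-2)}(R)\times B^{2(n-k-1)}(R),
\]
where $E\subset\R^{4}$ denotes the image of $B^{2}(R)\times\Sigma$ under the map of Lemma \ref{guth2}. Relabelling the coordinates of the target by a linear symplectomorphism so that the factors $E$ and $B^{2(k-2)}(R)$ together span $\ran P$ while $\R^{2}$ and $B^{2(n-k-1)}(R)$ span $\ker P$, we obtain the desired $\psi$.

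It remains to estimate the shadow. By construction $P\psi(B^{2n}(R))\subset E\times B^{2(k-2)}(R)$, so
\[
\vol_{2k}\bigl(P\psi(B^{2n}(R))\bigr)\le\vol_{4}(E)\cdot\omega_{2k-4}\,R^{2k-4}.
\]
The embedding of Lemma \ref{guth2} is symplectic, hence pulls the volume form $\tfrac12\Omega^{2}$ of $\R^{4}$ back to the product volume form of $B^{2}(R)\times\Sigma$; therefore $\vol_{4}(E)$ equals the symplectic volume of $B^{2}(R)\times\Sigma$, which is $\pi R^{2}$ since $\Sigma$ has area $1$. Thus $\vol_{2k}\bigl(P\psi(B^{2n}(R))\bigr)\le\pi\,\omega_{2k-4}\,R^{2k-2}$, i.e.\ $C_{n,k}=\pi\omega_{2k-4}$ works, and the rescaling argument above concludes.

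The one point requiring care is the coordinate bookkeeping in the middle step: one must arrange that the extra $\R^{2}$ produced by Lemma \ref{guth1} and the leftover $2(n-k-1)$-dimensional ball both land among the coordinates annihilated by $P$, and it is precisely this that forces the hypotheses — $k\ge 2$ so that $\ran P$ has room for a four-dimensional ball, and $k\le n-1$ so that $\ker P$ contains a conjugate coordinate pair available to absorb $\Sigma$. Everything else is routine: an inclusion of a ball into a product of balls and a permutation of symplectic factors are symplectic embeddings, a composition of symplectic embeddings is one, and symplectic maps preserve $\tfrac12\Omega^{2}$.
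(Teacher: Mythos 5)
Your proof is correct and follows essentially the same strategy as the paper: stack Guth's two lemmata so that a ball squeezes into a set of the form $(\text{image of }B^2(R)\times\Sigma)\times\R^2\times(\text{balls})$, observe the $\ran P$-shadow has volume growing like $R^{2k-2}$ rather than $R^{2k}$, and rescale. The only cosmetic difference is that the paper first reduces to $n=3$, $k=2$ and appeals to ``product with the identity'' for the general case, whereas you carry the extra factors $B^{2(k-2)}(R)$ and $B^{2(n-k-1)}(R)$ along explicitly; the bookkeeping, estimates, and conclusion are the same.
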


\begin{proof}
Up to the composition to a unitary automorphism of $(\R^{2n},i)$, we may assume that $V$ is the linear subspace corresponding to the coordinates $q_1,p_1,\dots,q_k,p_k$. Moreover, it is enough to consider the case $n=3$ and $k=2$, from which the general case follows by taking the product by the identity mapping. Because of these simplifications, $P:\R^6 \rightarrow \R^6$ is the standard projection on the subspace given by the first four coordinates $q_1,p_1,q_2,p_2$.

Let $R$ be a positive number. By Lemmata \ref{guth1} and \ref{guth2}, there are symplectic embeddings
\[
\varphi: B^4(R) \rightarrow \Sigma \times \R^2, \quad \psi: B^2(R) \times \Sigma \rightarrow \R^4.
\]
Consider the symplectic embedding $\tilde{\phi}:B^6(R) \rightarrow \R^6$ defined as the composition
\[
B^6(R) \hookrightarrow B^2(R) \times B^4(R) \stackrel{\mathrm{id}\times \varphi}{\longrightarrow} B^2(R) \times \Sigma \times \R^2 \stackrel{\psi \times \mathrm{id}}{\longrightarrow} \R^4 \times \R^2 = \R^6.
\]
Then
\begin{equation}
\label{volume}
\mathrm{vol}_4\bigl(P \tilde{\phi} (B^6(R)) \bigr) \leq \mathrm{vol}_4 \bigl(\psi(B^2(R) \times \Sigma)\bigr) = \mathrm{vol}_4 (B^2(R) \times \Sigma) = \pi R^2.
\end{equation}

The required symplectic embedding $\phi:B^6(1) \rightarrow \R^6$ is obtained by rescaling: Indeed, the embedding $\phi(z) := \tilde{\phi}(Rz)/R$ is symplectic and by (\ref{volume}), the quantity
\[
\mathrm{vol}_4 \bigl(P \phi(B^6(1)) \bigr) = \mathrm{vol}_4 \Bigl( \frac{1}{R} P \tilde{\phi}(B^6(R)) \Bigr) = \frac{1}{R^4} \mathrm{vol}_4\bigl(P \tilde{\phi} (B^6(R)) \bigr) \leq \frac{\pi}{R^2}
\]
is smaller than $\epsilon$, if $R$ is large enough.
\end{proof} 

\section{Remarks on the local question}

Let $\phi$ be a symplectic embedding of an open neighborhood $U$ of $0\in \R^{2n}$ into $\R^{2n}$ and let $P$ be the orthogonal projector onto $\R^{2k}$, the space spanned by $\partial/\partial q_1, \partial/\partial p_1, \dots, \partial/\partial q_k, \partial/\partial p_k$, with
$2\leq k \leq n-1$. Denote by $\psi$ the composition $P \phi$.
The local question proposed in the introduction is whether the inequality
\begin{equation}
\label{middle3}
\vol_{2k} \bigl(\psi (B^{2n}(R)) \bigr) \geq \omega_{2k} R^{2k},
\end{equation}
holds for $R>0$ small enough. 

The linear non-squeezing Theorem 1 implies that
\begin{equation}
\label{lincon}
J_{2k} \psi(x) \geq 1, \qquad \forall x\in U,
\end{equation}
where $J_{2k}\psi (x)$ is the $2k$-Jacobian of the map $\psi$ at $x$, that is the number
\[
J_{2k} \psi(x) = \max_{W\in \mathrm{Gr}_{2k}(\R^{2n})} |\det D\psi(x) |_W|.
\] 
Our first remark is that (\ref{middle3}) does not follow simply from the inequality (\ref{lincon}). In fact, if $m>h>1$, there are smooth maps $\varphi:\R^m \rightarrow \R^h$  whose $h$-Jacobian is everywhere at least 1 but for which
\[
\mathrm{vol}_h \bigl( \varphi(B^m(R)) \bigr) < \omega_h R^h
\]
for every small $R>0$. Examples with $m=3$ and $h=2$ can be found among the maps of the form
\[
\varphi: \C \times \R \cong \R^3 \rightarrow \C \cong \R^2, \quad
\varphi(z,t) = \rho(|z|) e^{it} z,
\]
where $\rho:[0,+\infty[ \rightarrow \R$ is a smooth positive function such that
\[
\rho(0)=1, \quad \rho'(0)=0, \quad \rho(r)<1 \quad \forall r>0.
\]
Indeed, such a $\varphi$ maps the cylinder $B^2(R)\times \R$ -- and a fortiori the ball $B^3(R)$ -- into the disk of radius $\rho(R)R$, which is smaller than $R$ for $R>0$. The $2$-Jacobian  of $\varphi$ is easily computed to be 
\[
J_2 \varphi (z,t)  = \rho(r) \bigl( \rho(r) + r \rho'(r) \bigr) \sqrt{1+r^2}, \quad \mbox{where } r=|z|,
\]
from which we find
\[
J_2 \varphi (z,t) = 1 + \left( \frac{1}{2} + 2\rho''(0) \right) |z|^2 + O(|z|^3) \quad \mbox{for } z\rightarrow 0.
\]
Therefore, if $\rho''(0)>-1/4$ then
$J_2 \varphi (z,t) \geq 1$ for $|z|$ small enough.

\medskip

By the linear non-squeezing Theorem 1, for every $x\in U$ the set
\[
\mathscr{W}(x) = \set{ W\in \mathrm{Gr}_{2k} (\R^{2n}) }{ |\det D\psi(x) |_W| \geq 1}
\]
is not empty. Our second remark is that if the ``multi-valued $2k$-dimensional distribution'' $\mathscr{W}$ is integrable, meaning that $U$ admits a $2k$-dimensional smooth foliation such that for every $x\in U$ the tangent space at $x$ of the leaf through $x$ belongs to $\mathscr{W}(x)$, then (\ref{middle3}) holds for every $R>0$ small enough. The proof of this fact is based on the following:

\begin{lem}
\label{foliation}
Let $\mathscr{F}$ be a $h$-dimensional foliation of $B^m(1)$, such that
each leaf is the graph of a smooth map from a strictly convex domain of $\R^h\times\{0\}$ to $\{0\}\times\R^{m-h}$.  Then $\mathscr{F}$ has a leaf $F$ such that the $h$-volume of $F$ is at least $\omega_h$.
\end{lem}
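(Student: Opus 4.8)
The plan is to isolate the one structural consequence of the hypothesis that really matters and then to estimate a single well-chosen leaf. Write $\R^m=\R^h\oplus\R^{m-h}$, let $Q\colon\R^m\to\R^h$ be the orthogonal projection, and for a leaf $F$ write $F=\{(x,f_F(x)):x\in\Omega_F\}$ with $\Omega_F=Q(F)$ strictly convex and $f_F$ smooth, so that $\vol_h(F)=\int_{\Omega_F}\sqrt{\det(I_h+Df_F(x)^{T}Df_F(x))}\,dx\ge|\Omega_F|$. The first thing I would prove is that \emph{every leaf reaches $\partial B^m(1)$ over its whole domain boundary}: for each leaf $F$ and each $\xi\in\partial\Omega_F$, $|(x,f_F(x))|\to 1$ as $\Omega_F\ni x\to\xi$. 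This is by contradiction: if not, a sequence $x_n\to\xi$ in $\Omega_F$ would keep $(x_n,f_F(x_n))$ in a compact subset of $B^m(1)$, hence (after extraction) $(x_n,f_F(x_n))\to p\in B^m(1)$ with $Q(p)=\xi$; since each leaf is transverse to the fibres of $Q$ (its tangent space meets $\{0\}\oplus\R^{m-h}$ only in $0$), one can build a foliation chart around $p$ preserving the $\R^h$–coordinate, and then the plaque of $F$ through $(x_n,f_F(x_n))$ for large $n$ is a graph over a fixed neighbourhood $V\ni\xi$, forcing $\xi\in\Omega_F$, a contradiction. So $\overline F\cap\partial B^m(1)$ sits exactly over $\partial\Omega_F$.

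Next I would pass to polar coordinates on the leaf $F_0$ through the origin. Centring the polar coordinates of $\Omega_{F_0}$ at $0$ (direction $u\in S^{h-1}$, radial width $\rho(u)>0$, so $f_{F_0}(0)=0$), the radial arcs $\gamma_u(t)=(tu,f_{F_0}(tu))$, $0\le t<\rho(u)$, foliate $F_0$, and the area element factors as $t^{h-1}$ times the Jacobian of the graph along the arc, giving
\[
\vol_h(F_0)=\int_{S^{h-1}}\!\int_0^{\rho(u)}t^{h-1}\sqrt{\det\bigl(I_h+Df_{F_0}(tu)^{T}Df_{F_0}(tu)\bigr)}\;dt\,d\sigma(u).
\]
By the boundary property above, $\gamma_u$ joins $0$ to $\partial B^m(1)$, so $\int_0^{\rho(u)}\sqrt{1+|\partial_t f_{F_0}(tu)|^2}\,dt=\mathrm{length}(\gamma_u)\ge 1$; and since $\det(I_h+Df^{T}Df)=\prod(1+\sigma_i^2)\ge 1+|Df\,u|^2=1+|\partial_t f_{F_0}(tu)|^2$, the inequality $\vol_h(F_0)\ge\omega_h=\tfrac1h|S^{h-1}|$ would follow from the one–variable estimate
\[
\int_0^{\rho(u)}t^{h-1}\sqrt{1+|\partial_t f_{F_0}(tu)|^2}\;dt\ \ge\ \frac1h\qquad\text{for every }u\in S^{h-1}.
\]

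The hard part will be exactly this last estimate, and I do not expect it to be provable from the single leaf alone: the weight $t^{h-1}$ defeats the bare length bound $\ge 1$, as shown by a leaf that over a tiny disk climbs like a thin spike from the origin out to the sphere — all of its radial arcs have length $\ge 1$, yet its $h$-volume tends to $0$. So one must use that $F_0$ lies in a foliation of the \emph{whole} ball, whose neighbouring leaves, by Step~1, also run out to $\partial B^m(1)$ over their domain boundaries and thus ``push'' $F_0$ to be spread out rather than spiky. I would look for a degree/topological input here: restricting the foliation to the spheres $\partial B^m(t)$ gives $(h-1)$–dimensional foliations $\mathscr F\cap\partial B^m(t)$, and one wants a single leaf $F$ with $\vol_{h-1}(F\cap\partial B^m(t))\ge h\omega_h t^{h-1}$ for a positive–measure set of $t$ simultaneously (using strict convexity of the domains and the fact that the assignment ``point $\mapsto$ boundary trace of its leaf'' cannot collapse the central region), after which the coarea formula $\vol_h(F)\ge\int_0^1\vol_{h-1}(F\cap\partial B^m(t))\,dt\ge\omega_h$ closes the argument. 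Getting that trace–volume bound uniformly in $t$ for one fixed leaf is where the real difficulty lies.
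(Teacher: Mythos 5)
The proposal is incomplete, and you say so yourself: the estimate
\[
\int_0^{\rho(u)} t^{h-1}\sqrt{1+|\partial_t f_{F_0}(tu)|^2}\,dt \;\geq\; \frac1h
\]
is exactly what you cannot prove, and your own ``thin spike'' example shows it is genuinely false for a single graph joining the origin to the sphere. More fundamentally, the choice to focus on the leaf $F_0$ \emph{through the origin} is the wrong move: there is no reason that particular leaf has volume $\geq\omega_h$ (the spike picture is a near-counterexample for it), and the lemma only asserts that \emph{some} leaf does. Your closing paragraph correctly senses that the full foliation must be used, but the degree-plus-coarea program you sketch is left entirely open.

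The paper's proof has a different and decisive key idea that is missing from your attempt: replace each leaf by an area-minimizer and invoke the monotonicity formula. Concretely, for each leaf $F$ one takes the $h$-surface $\tilde F$ minimizing $h$-volume among surfaces with the same boundary $\partial F = F\cap\partial B^m(1)$; Almgren's regularity theory makes $\tilde F$ a minimal submanifold (smooth away from a codimension-$\geq 2$ singular set). Strict convexity of the base domains yields uniqueness of $\tilde F$ and continuous dependence on $F$, so as $F$ ranges over the foliation the minimizers $\tilde F$ sweep the ball continuously, and there must be a leaf $F$ whose minimizer $\tilde F$ passes through the origin. The monotonicity formula for minimal submanifolds then gives $\vol_h(\tilde F)\geq\omega_h$, and since $\tilde F$ is the minimizer with the same boundary, $\vol_h(F)\geq\vol_h(\tilde F)\geq\omega_h$. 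Note how this sidesteps both of your difficulties at once: the monotonicity formula supplies precisely the kind of quantitative spreading that the weight $t^{h-1}$ demanded and that a single radial-length bound cannot give, and the intermediate-value selection of the \emph{right} leaf replaces your fixed (and possibly spiky) $F_0$. Your Step~1 (that each leaf meets $\partial B^m(1)$ over all of $\partial\Omega_F$) is correct and is implicitly used in the paper when speaking of $\partial F = F\cap\partial B^m(1)$, but by itself it is nowhere near sufficient.
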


Let us sketch the proof of this lemma. Given $F$ in $\mathscr{F}$, let $\tilde{F}$ be a $h$-surface which minimizes the $h$-volume among all $h$-surfaces with boundary $\partial F = F\cap \partial B^m(1)$, which by F.\ Almgren's regularity theory is a minimal submanifold, which is smooth away from a singular set of codimension at least 2 (see \cite{alm00}). By the strict convexity assumption, $\tilde{F}$ is unique and depends continuously on $F\in \mathscr{F}$. In particular, there is a $F\in \mathscr{F}$ such that $0$ belongs to $\tilde{F}$. Then the conclusion follows from the monotonicity formula for minimal submanifolds (see e.g.\ \cite[Corollary 1.13]{cm11}), which yields
\[
\mathrm{vol}_h (F)  \geq
\mathrm{vol}_h (\tilde{F}) \geq \omega_h.
\]

\medskip

Now we prove that the existence of a smooth $2k$-dimensional foliation $\mathscr{F}$ of $U$ which is tangent to $\mathscr{W}$ (in the sense explained above) implies the local middle-dimensional non-squeezing statement. If $R$ is small enough, the homothety $x\mapsto x/R$ maps the foliation $\mathscr{F}|_{B^{2n}(R)}$ into a foliation of $B^{2n}(1)$ which satisfies the assumptions of Lemma \ref{foliation}. We deduce that $\mathscr{F}|_{B^{2n}(R)}$ has leaf $F$ such that
\[
\mathrm{vol}_{2k} (F) \geq \omega_{2k} R^{2k}.
\]
Since $D\psi(0)|_{T_0 F}$ is an isomorphism,
up to the choice of a smaller $R$ we can also assume that the restriction of $\psi$ to $F$ is injective. Then the fact that 
\[
\bigl|\det (D\psi (x)|_{T_x F}) \bigr|\geq 1
\]
and the area formula imply that
\[
\vol_{2k} \bigl(\psi (B^{2n}(R)) \bigr) \geq \vol_{2k} \bigl(\psi (F) \bigr) \geq \vol_{2k} (F) \geq 
\omega_{2k} R^{2k},
\]
as claimed.

\medskip

We conclude this article by discussing the issue of the integrability of $\mathscr{W}$, which as we have seen implies the local non-squeezing result. We do not know examples where such a condition fails. The problem in proving this condition is  not to find a smooth selection for $\mathscr{W}$ -- for instance the ``maximal expanding distribution''
\[
\widehat{W}(x) := D\phi(x)^* \R^{2k} 
\]
is such a smooth selection (see (\ref{maxi})) -- but to find an integrable one. The distribution  
$\widehat{W}$ defined above need not be integrable, even  in the case $k=1$. An example with $n=2$ and $k=1$ is the following. Set $(Q,P) := \phi(q,p)$, and notice that
\[
\widehat{W}(q,p) = D\phi(q,p)^* \R^2 =
\mathrm{span} \{ \nabla Q_1(q,p), \nabla P_1 (q,p) \}.
\]
Consider the generating function
\[
S(Q,p) := \frac{1}{2} p_2 Q_1^2,
\]
and let $\phi$ be the (local) symplectic diffeomorphism defined implicitly by
\[
Q =  q + \frac{\partial S}{\partial p} (Q,p), \quad
P = p - \frac{\partial S}{\partial Q} (Q,p).
\]
Then $Q_1 =q_1$, $P_1 = p_1 - p_2 q_1$, so
\[
\nabla Q_1 = \frac{\partial}{\partial q_1}, \quad \nabla P_1 = \frac{\partial}{\partial p_1} - p_2 \frac{\partial}{\partial q_1} - q_1 \frac{\partial }{\partial p_2}.
\]
The commutator of these two vector fields is
\[
[\nabla Q_1, \nabla P_1] = - \frac{\partial}{\partial p_2},
\]
which is nowhere in the subspace spanned by $\nabla Q_1$ and $\nabla P_1$. So $\widehat{W}$ is not integrable. However, $\mathscr{W}$ is integrable in this example: Indeed, the constant foliation parallel to the subspace $\R^2$ is tangent to $\mathscr{W}$.

Finally, we observe that in the ``rigid case'', namely when $\mathscr{W}(x)$ consists of a single vector space for every $x\in U$, the fact that $\phi$ is symplectic implies that $\mathscr{W}$ is integrable. In fact, in this case Theorem 1 implies that $\mathscr{W}(x)$ consists of the space
\[
\widehat{W}(x) = D\phi(x)^* \R^{2k},
\]
which must be complex for every $x\in U$.  Therefore, denoting by $J$ the complex structure of $\R^{2n}$, the distribution
\[
\widehat{W}(x) = J\widehat{W}(x) = JD\phi(x)^*\R^{2k} = D\phi(x)^{-1}J\R^{2k} = D\phi(x)^{-1}\R^{2k}
\]
is tangent to the foliation given by the image by $\phi^{-1}$ of the linear foliation given by $2k$-dimensional subspaces parallel to $\R^{2k}$. Summarizing, the case in which we cannot prove the inequality (\ref{middle3}) for small $R$ is the following: $1<k<n$, $J_{2k}\psi(0)=1$, but in every neighborhood of $0$ there are points $x$ such that $J_{2k}\psi(x)>1$.

\providecommand{\bysame}{\leavevmode\hbox to3em{\hrulefill}\thinspace}
\providecommand{\MR}{\relax\ifhmode\unskip\space\fi MR }
% \MRhref is called by the amsart/book/proc definition of \MR.
\providecommand{\MRhref}[2]{%
  \href{http://www.ams.org/mathscinet-getitem?mr=#1}{#2}
}
\providecommand{\href}[2]{#2}

\end{document}